\font\smallit=cmti10
\renewcommand\section{\@startsection{section}{1}{\z@}
{-30pt \@plus -1ex \@minus -.2ex}
{2.3ex \@plus.2ex}
{\normalfont\normalsize\bfseries}}
\renewcommand\subsection{\@startsection{subsection}{2}{\z@}
{-3.25ex\@plus -1ex \@minus -.2ex}
{1.5ex \@plus .2ex}
{\normalfont\normalsize\bfseries}}
\renewcommand\subsubsection{\@startsection{subsubsection}{2}{\z@}
{-3.25ex\@plus -1ex \@minus -.2ex}
{1.5ex \@plus .2ex}
{\normalfont\normalsize\bfseries}}
\renewcommand{\@seccntformat}[1]{\csname the#1\endcsname. }
\begin{document}

\begin{center}
\uppercase{\bf A note on Hadwiger's conjecture: \\ Another proof that every $4$-chromatic graph has a $K_4$ minor}
\vskip 20pt
{\bf Daniel C. McDonald}\\
{\smallit Wolfram Research Inc., Champaign, Illinois}\\
{\tt daniel.cooper.mcdonald@gmail.com}\\
\end{center}
\vskip 30pt

\centerline{\bf Abstract}
\noindent
The first non-obvious case of Hadwiger's Conjecture states that every graph $G$ with chromatic number at least $4$ has a $K_4$ minor. We give a new proof that derives the $K_4$ minor from a proper $3$-coloring of a subgraph of $G$.
\pagestyle{myheadings} 
\thispagestyle{empty} 
\baselineskip=12.875pt 
\vskip 30pt

\newtheorem{thm}{Theorem}
\newtheorem{lem}[thm]{Lemma}
\newtheorem{prop}[thm]{Proposition}
\newtheorem{cor}[thm]{Corollary}
\newtheorem{hyp}[thm]{Hypothesis}
\newtheorem{case}[thm]{Case}
\theoremstyle{definition}
\newtheorem*{ack}{Acknowledgment}

\section{Introduction} 
\label{intro}
Hadwiger's conjecture states that every graph $G$ satisfying $\chi(G)\geq k$ has a $K_k$ minor (i.e. if $G$ is not $(k-1)$-colorable, then there exist $k$ pairwise disjoint nonempty connected subgraphs such that each pair is joined by an edge); see \cite{S} for an overview. The first three cases respectively state that $G$ has a vertex if $\chi(G)\geq 1$, $G$ has an edge if $\chi(G)\geq 2$, and $G$ has a cycle if $\chi(G)\geq 3$. The first non-obvious case is the one proved here: $G$ has a $K_4$ minor if $\chi(G)\geq 4$. This case was originally proved by Hadwiger in the 1943 paper \cite{H} in which he made his conjecture; he showed that every nonempty graph $G$ with no $K_4$ minor has a vertex $v$ of degree at most $2$, which implies that all such graphs are $3$-colorable by way of recursively giving $G-v$ a proper $3$-coloring, then coloring $v$ with a color not used on its neighbors. Dirac \cite{Di} proved the same statement in 1952 by noting that every graph $G$ satisfying $\chi(G)\geq 4$ contains a subgraph $H$ satisfying $\chi(H)=4$ with the property that $\chi(H-v)=3$ for every vertex $v$ in $H$, with this implying $H$ has minimum degree at least $3$ and no cut vertex; Dirac proved that $H$ having these properties implies $H$ has a $K_4$ minor. Duffin \cite{Du} once again proved this statement in 1965 by showing that the subgraphs of series-parallel graphs are precisely the graphs having no $K_4$ minor, which implies $3$-colorability since these graphs always have a vertex of degree at most $2$. Appel and Haken \cite{AH,AHK} proved in 1976 via their famed Four Color Theorem that $G$ has a $K_5$ minor if $\chi(G)\geq 5$, while Robertson, Seymour, and Thomas \cite{RST} used that result to prove in 1993 that $G$ has a $K_6$ minor if $\chi(G)\geq 6$; further cases remain open.

Our proof that $G$ has a $K_4$ minor if $\chi(G)\geq 4$, given in the next section, starts with a $4$-chromatic subgraph $H$ of $G$ having an edge $x y$ such that $\chi(H-x y)=3$. Giving $H-x y$ a proper $3$-coloring allows us to partition the vertices of $H$ into three sets, two of which are independent in $H$ and one of which would be independent except for the edge $x y$. We use this structure to construct a $K_4$ minor in $H$. We are not aware of this proof elsewhere in the literature. Extending this proof strategy to subsequent cases of Hadwiger's conjecture is left as an exercise to the reader.

\section{Theorem Statement and Proof}
\label{proof}

\begin{thm}
If $G$ satisfies $\chi(G)\geq 4$, then $G$ has a $K_4$ minor.
\end{thm}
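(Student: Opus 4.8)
The plan is to reduce to a well-structured subgraph and then read a $K_4$ minor off a proper $3$-coloring. First I would let $H$ be an edge-minimal subgraph of $G$ with $\chi(H)\geq 4$; then $\chi(H)=4$ and $\chi(H-e)=3$ for every edge $e$, so fixing any edge $xy$ of $H$ gives $\chi(H-xy)=3$. Fix a proper $3$-coloring of $H-xy$ with color classes $A,B,C$. Since $\chi(H)=4$, this coloring cannot be proper for $H$, so $x$ and $y$ must receive the same color; say $x,y\in A$. Thus $B$ and $C$ are independent in $H$, while $A$ is independent in $H$ except for the single edge $xy$, which is exactly the partition advertised in the introduction.

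Next I would locate two paths joining $x$ and $y$. Because no proper coloring of $H-xy$ may separate the colors of $x$ and $y$ (such a coloring would $3$-color $H$), a Kempe-chain argument shows that $x$ and $y$ lie in the same component of the bipartite subgraph $H[A\cup B]$ and in the same component of $H[A\cup C]$: if, say, they lay in different components of $H[A\cup B]$, swapping the two colors on the component of $x$ would recolor $x$ but not $y$, producing a proper $3$-coloring of $H$. This yields an $x$–$y$ path $P$ whose interior alternates through $B$ and $A$, and an $x$–$y$ path $Q$ whose interior alternates through $C$ and $A$; since $x$ and $y$ are nonadjacent in $H-xy$, each path has at least one interior vertex.

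The $K_4$ minor is then a chorded cycle. Suppose $P$ and $Q$ are internally disjoint and some interior vertex of $P$ is joined by an edge to some interior vertex of $Q$. Then the four sets $\{x\}$, $\{y\}$, $V(P)\setminus\{x,y\}$, and $V(Q)\setminus\{x,y\}$ are connected (the last two are subpaths), pairwise disjoint, and pairwise adjacent: the edge $xy$ joins the first two, the end-edges of $P$ and of $Q$ join each of $x$ and $y$ to both interiors, and the assumed crossing edge joins the two interiors. Contracting each set gives $K_4$, as desired.

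The hard part will be justifying the two assumptions in the previous paragraph: that $P$ and $Q$ can be chosen internally disjoint, and that an edge between their interiors exists. I would choose $P$ and $Q$ so as to minimize $|V(P)\cup V(Q)|$ and then argue that any overlap of their interiors, or any absence of a crossing edge, would let me recolor $H-xy$ by a local color exchange so that $x$ and $y$ receive distinct colors, contradicting $\chi(H)=4$. Where a purely color-theoretic exchange does not immediately suffice, I expect to fall back on the structural facts that $H$, being $4$-critical, is $2$-connected and has minimum degree at least $3$, which should force the necessary interaction between the $B$-part of $P$ and the $C$-part of $Q$.
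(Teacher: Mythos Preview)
Your setup and Kempe-chain step match the paper's, but the two ``hard'' assumptions you flag are genuinely false in general, and the fixes you sketch do not close the gap. Take $H$ to be the Haj\'os join of two copies of $K_4$: delete $uv_1$ from a $K_4$ on $\{u,v_1,a_1,b_1\}$ and $u'v_2$ from a $K_4$ on $\{u',v_2,a_2,b_2\}$, identify $u=u'$, and add the edge $v_1v_2$. This $H$ is $4$-critical, $2$-connected, and has minimum degree $3$, so the structural facts you plan to invoke are already in force. With $xy=a_1b_1$ the $3$-coloring of $H-xy$ is forced (up to swapping $B$ and $C$) to $A=\{a_1,b_1,a_2\}$, $B=\{u,v_2\}$, $C=\{v_1,b_2\}$; the \emph{only} $x$--$y$ paths are $P=a_1\,u\,b_1$ in $H[A\cup B]$ and $Q=a_1\,v_1\,b_1$ in $H[A\cup C]$, which are internally disjoint, yet $uv_1\notin E(H)$ and there is no edge whatsoever between $P^\circ=\{u\}$ and $Q^\circ=\{v_1\}$. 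Since $\chi(H)=4$, no recoloring of $H-xy$ can give $x$ and $y$ distinct colors, so the heuristic ``absence of a crossing edge permits a separating recoloring'' is simply false here. If instead you take $xy=v_1v_2$, the unique paths become $v_1\,a_1\,u\,a_2\,v_2$ and $v_1\,b_1\,u\,b_2\,v_2$, which share the internal vertex $u$; internal disjointness fails and no alternative $P,Q$ exist.

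The paper repairs both issues simultaneously by abandoning the requirement that the two odd cycles $C2=P\cup\{xy\}$ and $C3=Q\cup\{xy\}$ be internally disjoint, and by replacing your single crossing edge with a \emph{path} in $H[B\cup C]$ joining a vertex of $C2$ to a vertex of $C3$. A preliminary Kempe swap on components of $H[B\cup C]$ is used to guarantee that such a connecting path exists (this is the passage from $\pi$ to $\rho$). The $K_4$ minor is then assembled not from $\{x\},\{y\},P^\circ,Q^\circ$ but from the connecting path, a subpath $Q$ of $C3$ whose endpoints lie on $C2$, and two arcs of $C2$, with a short case split on how those endpoints sit relative to $xy$ and the connecting path's endpoint on $C2$. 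In the example above with $xy=a_1b_1$, the needed connecting path in $H[B\cup C]$ is $u\,b_2\,v_2\,v_1$, of length three, which is why your one-edge version cannot succeed.
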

\begin{proof}
Delete edges from $G$ until the resulting graph $H$ satisfies $\chi(H)=4$ and contains an edge $x y$ such that $\chi(H-x y)=3$. Color the vertices of $H$ (improperly) by giving $H-x y$ a proper $3$-coloring $\pi$, noting that $\pi(x)=\pi(y)$ (or else $\chi(H)\leq3$). Let $\pi$ partition the vertices of $H$ into the sets $\pi_1$, $\pi_2$, and $\pi_3$, with $\{x,y\}\subseteq \pi_1$.

Note that the subgraph $H-\pi_1$ of $H$ must contain some edge, or else $\chi(H)\leq3$ because $\pi$ could be changed into a proper $3$-coloring of $H$ by recoloring $\pi_3$ like $\pi_2$ and giving $y$ its own color. Further note that the subgraphs $H-\pi_3$ and $H-\pi_2$ of $H$ must each contain an odd cycle, or else one of these subgraphs would be bipartite, and hence $\chi(H)\leq3$ because $\pi$ could be changed into a proper $3$-coloring of $H$ by properly recoloring $H-\pi_3$ or $H-\pi_2$ with $2$ colors.

Let $\rho$ be the (improper) $3$-coloring of $H$ partitioning the vertices of $H$ into the sets $\rho_1$, $\rho_2$, and $\rho_3$, where $\rho$ is obtained from $\pi$ by swapping the colors in each component of $H-\pi_1$ not containing a vertex from an odd cycle in $H-\pi_3$; that is, let $\rho_1=\pi_1$, let $\rho_2$ be the union of the subset of vertices of $\pi_2$ in a component of $H-\pi_1$ containing a vertex from an odd cycle in $H-\pi_3$ and the subset of vertices of $\pi_3$ in a component of $H-\pi_1$ not containing a vertex from an odd cycle in $H-\pi_3$, and let $\rho_3$ be the union of the subset of vertices of $\pi_3$ in a component of $H-\pi_1$ containing a vertex from an odd cycle in $H-\pi_3$ and the subset of vertices of $\pi_2$ in a component of $H-\pi_1$ not containing a vertex from an odd cycle in $H-\pi_3$. Then the only monochromatic edge of $H$ under $\rho$ is again $x y$, and furthermore there is a component of $H-\rho_1$ that contains both a vertex from an odd cycle in $H-\rho_3$ and a vertex from an odd cycle in $H-\rho_2$. Indeed, the components of $H-\pi_1$ and $H-\rho_1$ are the same, every odd cycle in $H-\pi_3$ still exists in $H-\rho_3$, and every odd cycle in $H-\rho_2$, of which there must be at least one (or else $H-\rho_2$ would be bipartite, and hence $\chi(H)\leq3$ because $\rho$ could be changed into a proper $3$-coloring of $H$ by recoloring $H-\rho_2$ with $2$ colors), would contain a vertex from a component of $H-\rho_1$ containing a vertex from an odd cycle in $H-\rho_3$ (since any odd cycle contained in $H-(\rho_2\cup(\rho_3-\pi_3))$ would have been an odd cycle in $H-\pi_3$, contradicting our construction of $\rho$ by only swapping colors on vertices in components of $H-\pi_1$ that didn't contain any vertex from an odd cycle in $H-\pi_3$).

Let $C2$ and $C3$ be odd cycles in $H-\rho_3$ and $H-\rho_2$, respectively, such that there exist vertices $u$ in $C2$ and $v$ in $C3$ such that $u$ and $v$ are in the same component of $H-\rho_1$; choose $u$ and $v$ to be the closest such vertices in $H-\rho_1$, so that there exists a path $P$ between $u$ and $v$ in $H-\rho_1$ no internal vertex of which lies in either $C2$ or $C3$ (since $u\in C2-\rho_1$ and $v\in C3-\rho_1$, we also have $u\in\rho_2$ and $v\in\rho_3$ and thus $u\notin C3$ and $v\notin C2$). Note that $x y$ is an edge in both $C2$ and $C3$, as $H-(\rho_3\cup\{x y\})$ and $H-(\rho_2\cup\{x y\})$ are both bipartite and therefore lack odd cycles. Let $Q$ be the shortest path subgraph of $C3$ containing $v$ whose endpoints $w$ and $z$ lie in $C2$ (so no internal vertices of $Q$ are in $C2$); $Q$ must exist because $C3-\{x y\}$ is a path subgraph of $C3$ containing $v$ whose endpoints $x$ and $y$ lie in $C2$.

We complete the proof by constructing four pairwise disjoint nonempty connected subgraphs $H1,H2,H3,H4$ of $H$ such that each pair is joined by an edge; these provide a $K_4$ minor in $G$. If $w$ and $z$ lie in separate components of $C2-\{x y, u\}$, define the edge $e$ by $e=x y$, and if $w$ and $z$ lie in the same component of $C2-\{x y, u\}$, without loss of generality assume the vertices $x,w,z,u,y$ appear in that order (not necessarily all consecutively or distinctly) in $C3$, and pick the edge $e$ to be somewhere between $w$ and $z$ in this ordering. Let our subgraphs be $H1=Q-\{w,z\}$, $H2=P-\{v\}$, and the two components $H3$ and $H4$ of $C2-\{e, u\}$; see Figure \ref{subgraphs}. Note that $H1$ is nonempty and connected because $w$ and $z$ are the endpoints of the path $Q$ containing the internal vertex $v$, $H2$ is nonempty and connected because $v$ is an endpoint of the path $P$ between distinct vertices $u$ and $v$, and $H3$ and $H4$ are each nonempty and connected because $C2$ is a cycle containing the edge $e$ and the vertex $u$ not in $e$. Each pair of $H2$, $H3$, and $H4$ has an edge joining them but no vertex in common because the vertices of the cycle $C2$ are partitioned among the vertex sets of the paths $H3$ and $H4$ along with the only vertex $u$ in $H2$ that also lies in $C2$. $H1$ and $H2$ have an edge joining them but no vertex in common because $v$ is a vertex in $H1$ which is itself a subgraph of $C3$, and $v$ is also the only vertex of the path $P$ to lie in $C3$, where $H2=P-\{v\}$. $H1$ and each of $H3$ and $H4$ have an edge joining them but no vertex in common because each of $H3$ and $H4$ contains one of the endpoints $w$ and $z$ of the path $Q$, but no internal vertices of $Q$, where $H1=Q-\{w,z\}$.
\end{proof}

\begin{figure}[htb]
\centering
\subcaptionbox{if $w$ and $z$ lie in separate components of $C2-\{x y, u\}$ \label{sub1}}[9cm]
{
\begin{tikzpicture}
\Vertex[x=0,y=1,size=.5,label=$x$,fontsize=\large]{x}
\Vertex[x=0,y=-1,size=.5,label=$y$,fontsize=\large]{y}
\Vertex[x=1,y=1,size=.5,label=$w$,fontsize=\large]{w}
\Vertex[x=1,y=-1,size=.5,label=$z$,fontsize=\large]{z}
\Vertex[x=2,y=.5,size=.5]{uw}
\Vertex[x=2,y=-.5,size=.5]{uz}
\Vertex[x=3,y=0,size=.5,label=$u$,fontsize=\large]{u}
\Vertex[x=3,y=1,size=.5]{vw}
\Vertex[x=3,y=-1,size=.5]{vz}
\Vertex[x=4,y=0,size=.5]{uv}
\Vertex[x=5,y=0,size=.5,label=$v$,fontsize=\large]{v}
\Edge[label=$e$,position={right=0mm},fontsize=\large](x)(y)
\Edge[label=$H3$,position={below=0mm},fontsize=\large,style={dashed}](x)(w)
\Edge[label=$H4$,position={above=0mm},fontsize=\large,style={dashed}](y)(z)
\Edge[style={dashed}](w)(uw)
\Edge[style={dashed}](z)(uz)
\Edge[](uw)(u)
\Edge[](uz)(u)
\Edge[](w)(vw)
\Edge[](z)(vz)
\Edge[label=$H1$,position={above=0mm},fontsize=\large,,style={dashed}](vw)(v)
\Edge[style={dashed}](vz)(v)
\Edge[label=$H2$,position={above=0mm},fontsize=\large,style={dashed}](u)(uv)
\Edge[](uv)(v)
\end{tikzpicture}
}
\subcaptionbox{if $w$ and $z$ lie in the same component of $C2-\{x y, u\}$ \label{sub2}}[9cm]
{
\begin{tikzpicture}
\Vertex[x=0,y=1,size=.5,label=$x$,fontsize=\large]{x}
\Vertex[x=0,y=-1,size=.5,label=$y$,fontsize=\large]{y}
\Vertex[x=1,y=1,size=.5,label=$w$,fontsize=\large]{w}
\Vertex[x=2,y=.6,size=.5]{ew}
\Vertex[x=3,y=.2,size=.5]{ez}
\Vertex[x=4,y=-.2,size=.5,label=$z$,fontsize=\large]{z}
\Vertex[x=5,y=-.6,size=.5]{uz}
\Vertex[x=6,y=-1,size=.5,label=$u$,fontsize=\large]{u}
\Vertex[x=4,y=-1,size=.5]{yu}
\Vertex[x=3,y=1,size=.5]{vw}
\Vertex[x=5,y=.4,size=.5]{vz}
\Vertex[x=6,y=0,size=.5]{uv}
\Vertex[x=6,y=1,size=.5,label=$v$,fontsize=\large]{v}
\Edge[label=$H3$,position={right=0mm},fontsize=\large,style={dashed}](x)(y)
\Edge[style={dashed}](x)(w)
\Edge[style={dashed}](w)(ew)
\Edge[label=$e$,position={below=0mm},fontsize=\large](ew)(ez)
\Edge[label=$H4$,position={below=0mm},fontsize=\large,style={dashed}](ez)(z)
\Edge[style={dashed}](z)(uz)
\Edge[](uz)(u)
\Edge[style={dashed}](y)(yu)
\Edge[](yu)(u)
\Edge[](w)(vw)
\Edge[](z)(vz)
\Edge[label=$H1$,position={below=0mm},fontsize=\large,style={dashed}](vw)(v)
\Edge[style={dashed}](vz)(v)
\Edge[label=$H2$,position={left=0mm},fontsize=\large,style={dashed}](u)(uv)
\Edge[](uv)(v)
\end{tikzpicture}
}
\caption{Connected subgraphs $H1,H2,H3,H4$ of $H$ represented by dashed lines (which could represent paths of length $0$ or greater) plus particular vertices, separated pairwise by solid edges.}\label{subgraphs}
\end{figure}

\end{document}